\newtheorem{thm}{Theorem}[section]
\newtheorem{lem}{Lemma}[section]
\newtheorem{prop}{Proposition}[section]
\begin{document}
\title[ ]{Semi-Fredholm and semi-Browder spectrums for the $\alpha$-times integrated semigroups}
\author[ A. Tajmouati,  A. El Bakkali, M.B. Mohamed Ahmed and H. Boua
\\]
{ A. Tajmouati, A. El Bakkali, M.B. Mohamed Ahmed and H. Boua}
\address{A. Tajmouati, M.B. Mohamed Ahmed and H. Boua\newline
 Sidi Mohamed Ben Abdellah
 Univeristy,
 Faculty of Sciences Dhar Al Mahraz, Fez,  Morocco.}
\email{abdelaziz.tajmouati@usmba.ac.ma}
\email{bbaba2012@gmail.com}
\email{hamid12boua@yahoo.com}
\address{A. El Bakkali  \newline
Department of Mathematics
University Chouaib Doukkali,
Faculty of Sciences.
24000, Eljadida, Morocco.}
\email{aba0101q@yahoo.fr}
\subjclass[2010]{47D62, 47A10}
\keywords{$\alpha$-times integrated semigroup, essential descent and ascent, upper and lower semi-Browder and semi-Fredholm.}
\maketitle
\begin{abstract}
In this paper, we describe the different spectrums of the $\alpha$-times integrated semigroups by the spectrums of their generators. Specially, essential ascent and descent, upper and lower semi-Fredholm and semi-Browder spectrums.
\end{abstract}
\section{Introduction}
Let $X$ be a complex Banach space and $\mathcal{B}(X)$ the algebra of all bounded linear operators on  $X$. We denote by $D(T)$, $R(T)$, $R^\infty(T):=\cap_{n\geq 1}R(T^n)$, $N(T)$, $\rho(T)$, $\sigma(T),$ and $\sigma_p(T)$ respectively the domain, the range, the hyper range, the kernel, the resolvent and the spectrum of $T$, where
$\sigma(T)=\{\lambda\in\mathds{C}\ : \lambda-T \,\mbox{is not bijective}\}$ and $\sigma_p(T)=\{\lambda\in\mathds{C}\ : \lambda-T \,\mbox{is not one to one}\}.$ The function resolvent of $T\in\mathcal{B}(X)$ is defined for all $\lambda\in\rho(T)$ by $R(\lambda,T)=(\lambda -T)^{-1}.$ The ascent and descent of an operator $T$ are defined respectively by $$a(T)=\inf\{k\in\mathds{N} : N(T^k)=N(T^{k+1})\}\,\mbox{ and }\,$$
$$d(T)=\inf\{k\in\mathds{N} : R(T^k)=R(T^{k+1})\},$$  with the convention $inf(\varnothing)=\infty$.
The essential ascent and descent of an operator $T$ are defined respectively by $$a_e(T)=\min\{k\in\mathds{N} : \dim N(T^{k+1})/N(T^k)<\infty\}\,\mbox{ and }\,$$
$$d_e(T)=\min\{k\in\mathds{N} : \dim R(T^k)/R(T^{k+1})<\infty\}.$$
The ascent, descent , essential ascent and essential descent spectrums are defined by
  $$\sigma_a(T)=\{\lambda\in\mathds{C}\ : a(\lambda-T)=\infty\};$$
  $$\sigma_d(T)=\{\lambda\in\mathds{C} : d(\lambda-T)=\infty\};$$
  $$\sigma_{a_e}(T)=\{\lambda\in\mathds{C} : a_e(\lambda-T)=\infty\};$$
  $$\sigma_{d_e}(T)=\{\lambda\in\mathds{C} : d_e(\lambda-T)=\infty\};$$
The sets of upper and lower semi-Fredholm and their spectrums are defined respectively by
  $$\Phi_+(X)=\{T\in\mathcal{B}(X) : \delta(T)=\dim N(T)<\infty \,\mbox{and}\,R(T) \,\mbox{is closed}\},$$
  $$\sigma_{e_+}(T)=\{\lambda\in\mathds{C} : \lambda-T\notin \Phi_+(X)\}$$ and
  $$\Phi_-(X)=\{T\in\mathcal{B}(X) : \beta(T)=codim R(T)=\infty\},$$
  $$\sigma_{e_-}(T)=\{\lambda\in\mathds{C}\backslash \lambda-T\notin \Phi_-(X)\}.$$
  An operator $T\in \mathcal{B}(X)$ is called semi-Fredholm, in symbol $T\in \Phi_\pm(X)$, if $$T\in \Phi_+(X)\cup\Phi_-(X).$$
  An operator $T\in \mathcal{B}(X)$ is called Fredholm, in symbol $T\in \Phi(X)$, if $$T\in \Phi_+(X)\cap\Phi_-(X).$$
  The essential and semi-Fredholm spectrums are defined by
  $$\sigma_e(T)=\{\lambda\in\mathds{C} : \lambda-T\notin \Phi(X)\};$$
  $$\sigma_{e_\pm}(T)=\{\lambda\in\mathds{C} : \lambda-T\notin \Phi_\pm(X)\};$$
The sets of upper and lower semi-Browder and their spectrums are defined respectively by
$${Br}_+(X)=\{T\in\Phi_+(X) : a(T)<\infty\},$$
  $$\sigma_{{Br}_+}(T)=\{\lambda\in\mathds{C} : \lambda-T\notin Br_+(X)\}$$ and
  $${Br}_-(X)=\{T\in\Phi_-(X) : d(T)<\infty\},$$
  $$\sigma_{Br_-}(T)=\{\lambda\in\mathds{C} : \lambda-T\notin Br_-(X)\}.$$
  An operator $T\in \mathcal{B}(X)$ is called semi-Browder, in symbol $T\in Br_\pm(X)$, if
  $$T\in {Br}_+(X)\cup {Br}_-(X).$$
  An operator $T\in \mathcal{B}(X)$ is called Browder, in symbol $T\in Br(X)$, if
  $$T\in Br_+(X)\cap Br_-(X).$$
The semi-Browder and Browder spectrums are defined by
$$\sigma_{{Br}_{\pm}}(T)=\{\lambda\in\mathds{C} : \lambda-T\notin {Br}_\pm(X)\}$$
and
$$\sigma_{Br}(T)=\{\lambda\in\mathds{C} : \lambda-T\notin Br(X)\}.$$
Let $\alpha\geq 0$ and let $A$ be a linear operator on a Banach space $X$. We recall that $A$ is the generator of an $\alpha$-times integrated semigroup
$(S(t))_{t\geq 0}$ on $X$ \cite{r.5} if
$]\omega, +\infty[ \subseteq \rho(A)$ for some $\omega\in \mathds{R}$
and there exists a strongly continuous mapping $S: [0, +\infty[ \rightarrow \mathcal{B}(X)$ satisfying
\begin{eqnarray*}
\|S(t)\|&\leq& Me^{\omega t} \,\,\mbox{for all}\,\, t\geq 0 \,\,\mbox{ and some }\,\, M > 0;\\
R(\lambda, A)&=& \lambda^\alpha\int_0^{+\infty} e^{-\lambda t}S(t)ds \,\,\mbox{for all}\,\, \lambda> \max\{\omega, 0\},
\end{eqnarray*}
in this case, $(S(t))_{t\geq 0}$ is called an $\alpha$-times integrated semigroup and the domain of its generator $A$ is defined by
$$D(A)=\left\{x\in X : \int_0^tS(s)Axds=S(t)x-\frac{t^\alpha x}{\Gamma(\alpha+1)}\right\},$$
where $\Gamma$ is the Euler integral giving by $$\Gamma(\alpha+1)=\int_0^{+\infty} x^\alpha e^{-x}dx.$$
We know that $(S(t))_{t\geq 0}\subseteq\mathcal{B}(X)$ is an $\alpha$-times integrated semigroup if and only if
$$S(t+s)=\frac{1}{\Gamma(\alpha)}\left(\int_t^{t+s}(t+s-r)^{\alpha-1}S(r)xdr-
\int_0^{s}(t+s-r)^{\alpha-1}S(r)xdr\right)$$ for all $x\in X$ and all $t,s\geq 0$.\\
In \cite{r.3}, the authors have studied the different spectrums of the 1-times integrated semigroups.
In our paper \cite{r.12}, we have studied descent, ascent, Drazin, Fredholm and Browder spectrums of the $\alpha$-times integrated semigroups. In this paper, we continue to study $\alpha$-times integrated semigroups for all $\alpha\geq 0$. We investigate the relationships between the different spectrums of an $\alpha$-times integrated semigroup and their generators, precisely the essential ascent and descent, upper and lower semi-Fredholm and semi-Browder spectrums.

\section{Main results}

\begin{lem}\label{l0}\cite[Proposition 2.4]{r.1}
Let $A$ be the generator of an $\alpha$-times integrated semigroup $(S(t))_{t\geq 0}\subseteq \mathcal{B}(X)$ where $\alpha \geq 0.$ Then for all $x\in D(A)$ and all $t\geq 0$ we have
\begin{enumerate}
  \item $S(t)x\in D(A)$ and $AS(t)x=S(t)Ax.$
  \item $S(t)x=\frac{t^\alpha}{\Gamma(\alpha+1)}x+\int_0^t S(s)Axds.$
\end{enumerate}
Moreover, for all $x\in X$ we get $\int_0^t S(s)xds\in D(A)$ and
  $$A\int_0^t S(s)xds=S(t)x -\frac{t^\alpha}{\Gamma(\alpha+1)}x.$$
\end{lem}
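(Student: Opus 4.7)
The plan is to reduce everything to the defining Laplace-transform identity
$$R(\lambda,A)x = \lambda^\alpha \int_0^{+\infty} e^{-\lambda t}S(t)x\,dt,\qquad \lambda>\max\{\omega,0\},$$
together with uniqueness of vector-valued Laplace transforms and the closedness of $A$. First I would prove the commutation $S(t)R(\lambda,A) = R(\lambda,A)S(t)$ for $\lambda>\max\{\omega,0\}$ and $t\geq 0$, by pulling $S(t)$ under the integral in the Laplace representation (licit by strong continuity of $S$ and the bound $\|S(s)\|\leq Me^{\omega s}$) and using $S(t)S(s)=S(s)S(t)$, which is read off from the functional equation quoted in the excerpt. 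Part (1) is then immediate: since $D(A)=R(\lambda,A)(X)$, for $x\in D(A)$ write $x=R(\lambda,A)y$ with $y=(\lambda-A)x$; commutation gives $S(t)x=R(\lambda,A)S(t)y\in D(A)$, and applying $(\lambda-A)$ yields $AS(t)x=S(t)Ax$.

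For the ``Moreover'' claim, fix $x\in X$ and set $v(t)=\int_0^tS(s)x\,ds$. Applying the bounded operator $R(\lambda,A)$ under the integral and using commutation gives $R(\lambda,A)v(t)=\int_0^t S(s)R(\lambda,A)x\,ds$; combined with part (1) applied to $R(\lambda,A)x\in D(A)$, a short manipulation produces a preimage of $v(t)$ under $(\lambda-A)$, showing $v(t)\in D(A)$ and identifying $(\lambda-A)v(t)$. The cleanest final check is a Laplace-transform comparison: a direct calculation gives
$$\int_0^{+\infty}e^{-\lambda t}\left(S(t)x-\frac{t^\alpha}{\Gamma(\alpha+1)}x\right)dt = \frac{1}{\lambda^\alpha}R(\lambda,A)x - \frac{1}{\lambda^{\alpha+1}}x = \frac{1}{\lambda^{\alpha+1}}AR(\lambda,A)x,$$
which, since $A$ is closed, equals the Laplace transform of $Av(t)$. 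Uniqueness then yields the desired identity.

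Finally, part (2) follows by a second Laplace-transform comparison: for $x\in D(A)$, the Laplace transform of the right-hand side is
$$\frac{1}{\lambda^{\alpha+1}}x + \frac{1}{\lambda^{\alpha+1}}R(\lambda,A)Ax,$$
and using $R(\lambda,A)Ax=\lambda R(\lambda,A)x-x$ (valid on $D(A)$) this collapses to $\lambda^{-\alpha}R(\lambda,A)x$, which is precisely the Laplace transform of $S(t)x$. The main obstacle is the ``Moreover'' step: one must carefully push the unbounded closed operator $A$ through the Riemann integral $\int_0^tS(s)x\,ds$. The cleanest route is to stay inside $\mathcal{B}(X)$ by working with $R(\lambda,A)$ first, and to invoke uniqueness of the vector-valued Laplace transform only at the end.
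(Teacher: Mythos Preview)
The paper does not give its own proof of this lemma: it is quoted verbatim from Hieber \cite[Proposition~2.4]{r.1}, so there is no in-paper argument to compare against. Your Laplace-transform strategy is the standard one and is essentially how the result is established in the cited reference; in that sense the approach is correct.

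There is, however, a genuine gap in your treatment of the ``Moreover'' clause. You write that, after establishing $R(\lambda,A)v(t)=\int_0^t S(s)R(\lambda,A)x\,ds$ and invoking part~(1) for $R(\lambda,A)x\in D(A)$, ``a short manipulation produces a preimage of $v(t)$ under $(\lambda-A)$, showing $v(t)\in D(A)$.'' This is not right on two counts. First, a preimage of $v(t)$ under $(\lambda-A)$ is an element $z$ with $(\lambda-A)z=v(t)$, and such a $z$ always exists trivially since $\lambda\in\rho(A)$; what you actually need is an element $w$ with $R(\lambda,A)w=v(t)$. Second, part~(1) together with closedness of $A$ only gives you $A\!\int_0^t S(s)R(\lambda,A)x\,ds=\int_0^t S(s)AR(\lambda,A)x\,ds$, and since $AR(\lambda,A)=\lambda R(\lambda,A)-I$ this collapses to the tautology $AR(\lambda,A)v(t)=\lambda R(\lambda,A)v(t)-v(t)$. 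No information about $v(t)\in D(A)$ is obtained this way.

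The clean fix is to swap the order: prove part~(2) first (your Laplace computation for it is correct and self-contained), and then for arbitrary $x\in X$ apply part~(2) to $y:=R(\lambda,A)x\in D(A)$ to obtain
\[
S(t)R(\lambda,A)x=\frac{t^\alpha}{\Gamma(\alpha+1)}R(\lambda,A)x+\int_0^t S(s)AR(\lambda,A)x\,ds
=\frac{t^\alpha}{\Gamma(\alpha+1)}R(\lambda,A)x+\lambda R(\lambda,A)v(t)-v(t).
\]
Using $S(t)R(\lambda,A)=R(\lambda,A)S(t)$ and rearranging yields
\[
v(t)=R(\lambda,A)\Bigl[\lambda v(t)-S(t)x+\tfrac{t^\alpha}{\Gamma(\alpha+1)}x\Bigr],
\]
which exhibits $v(t)\in D(A)$ and gives $Av(t)=S(t)x-\tfrac{t^\alpha}{\Gamma(\alpha+1)}x$ directly. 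Note also that density of $D(A)$ cannot be used as a shortcut here, since generators of $\alpha$-times integrated semigroups need not be densely defined. With this reordering your argument is complete.
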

We begin by the lemmas.
\begin{lem}\label{l1} Let $A$ be the generator of an $\alpha$-times integrated semigroup $(S(t))_{t\geq 0}$ with $\alpha>0$. Then for all $\lambda\in\mathds{C}$ and all $t\geq 0$
\begin{enumerate}
\item $(\lambda-A)D_\lambda(t)x=\int_0^t e^{\lambda(t-s)}\frac{s^{\alpha-1} x}{\Gamma(\alpha)}ds-S(t)x,\,\,\forall x\in X$  where $$D_\lambda(t)x=\int_0^t e^{\lambda (t-r)}S(r)dr;$$
\item $D_\lambda(t)(\lambda-A)x=\int_0^t e^{\lambda (t-s)}\frac{s^{\alpha-1} x}{\Gamma(\alpha)}ds-S(t)x,\,\,\forall x\in D(A)$.
\end{enumerate}
\end{lem}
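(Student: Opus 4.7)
The plan is to prove both identities by a common mechanism: rewrite the integrand so that Lemma \ref{l0} applies, push $A$ through the integral via its closedness, and reduce what remains to a one-line scalar integration-by-parts identity.

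For part (1), I fix $x\in X$ and introduce the primitive $F(r):=\int_0^r S(u)x\,du$. The ``moreover'' part of Lemma \ref{l0} gives $F(r)\in D(A)$ with $AF(r)=S(r)x-r^\alpha x/\Gamma(\alpha+1)$. Integrating by parts in $D_\lambda(t)x$ against this primitive (using $F(0)=0$) produces
$$D_\lambda(t)x = F(t) + \lambda\int_0^t e^{\lambda(t-r)} F(r)\,dr.$$
Since $r\mapsto AF(r)$ is continuous on $[0,t]$, the closedness of $A$ lets me bring $A$ under the integral, so $D_\lambda(t)x\in D(A)$ and
$$AD_\lambda(t)x = AF(t) + \lambda\int_0^t e^{\lambda(t-r)} AF(r)\,dr.$$
Subtracting from $\lambda D_\lambda(t)x$ and cancelling the $\lambda S(r)x$ terms gives
$$(\lambda-A)D_\lambda(t)x = -S(t)x + \frac{t^\alpha x}{\Gamma(\alpha+1)} + \lambda\int_0^t e^{\lambda(t-r)}\frac{r^\alpha x}{\Gamma(\alpha+1)}\,dr,$$
so it remains only to verify the purely scalar identity
$$\frac{t^\alpha}{\Gamma(\alpha+1)} + \lambda\int_0^t e^{\lambda(t-r)}\frac{r^\alpha}{\Gamma(\alpha+1)}\,dr = \int_0^t e^{\lambda(t-s)}\frac{s^{\alpha-1}}{\Gamma(\alpha)}\,ds,$$
which follows from a single integration by parts using $\alpha/\Gamma(\alpha+1)=1/\Gamma(\alpha)$, and is trivial at $\lambda=0$.

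For part (2), I fix $x\in D(A)$. Lemma \ref{l0}(2) applied to $x$ gives $\int_0^r S(u)Ax\,du = S(r)x - r^\alpha x/\Gamma(\alpha+1)$, so the analogous integration by parts applied inside $D_\lambda(t)Ax=\int_0^t e^{\lambda(t-r)} S(r)Ax\,dr$ yields
$$D_\lambda(t)Ax = S(t)x - \frac{t^\alpha x}{\Gamma(\alpha+1)} + \lambda D_\lambda(t)x - \lambda\int_0^t e^{\lambda(t-r)}\frac{r^\alpha x}{\Gamma(\alpha+1)}\,dr.$$
Forming $\lambda D_\lambda(t)x - D_\lambda(t)Ax$ cancels the $\lambda D_\lambda(t)x$ contribution, and the same scalar identity above delivers the claim. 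Observe that no closedness is needed in (2), since $Ax$ is already a vector in $X$; the only delicate point in the whole lemma is thus the passage of $A$ under the integral in (1), which is justified by the continuity of $AF(\cdot)$.
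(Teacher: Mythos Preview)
Your proof is correct. For part (2) your argument and the paper's are essentially the same computation, only organized differently: the paper differentiates $S(s)x$ using Lemma~\ref{l0}(2) to write $S'(s)x=\tfrac{s^{\alpha-1}}{\Gamma(\alpha)}x+S(s)Ax$ and then integrates by parts against $e^{\lambda(t-s)}$, landing directly on the $\tfrac{s^{\alpha-1}}{\Gamma(\alpha)}$ integrand, whereas you integrate by parts first with the primitive and then invoke the scalar identity to convert $\tfrac{r^\alpha}{\Gamma(\alpha+1)}$ into $\tfrac{s^{\alpha-1}}{\Gamma(\alpha)}$. Same content, different bookkeeping.

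For part (1) the routes genuinely diverge. The paper never appeals to the closedness of $A$ to move $A$ through an integral; instead it picks $\mu\in\rho(A)$, checks that $R(\mu,A)$ commutes with $S(s)$ and hence with $D_\lambda(t)$, and then reduces to part (2) applied to $R(\mu,A)x\in D(A)$, reading off both $D_\lambda(t)x\in D(A)$ and the identity from the resulting equation $D_\lambda(t)x=R(\mu,A)[\cdots]$. Your approach via the primitive $F(r)=\int_0^r S(u)x\,du$ and Hille's theorem for closed operators is more self-contained: it uses only the ``moreover'' clause of Lemma~\ref{l0} and the strong continuity of $S$, and avoids establishing the commutation $R(\mu,A)S(s)=S(s)R(\mu,A)$. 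The paper's resolvent trick, on the other hand, sidesteps the extra scalar integration by parts and makes the passage from $X$ to $D(A)$ explicit. Both are standard devices; yours is slightly more elementary here.
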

\begin{proof}
\begin{enumerate}
 \item By Lemma \ref{l0}, we know that for all $x\in D(A)$
 $$S(s)x=\frac{s^\alpha}{\Gamma(\alpha+1)}x+\int_0^sS(r)Axdr.$$
 Then, since $\Gamma(\alpha+1)=\alpha\Gamma(\alpha)$, we obtain
 $$S'(s)x= \frac{s^{\alpha-1}}{\Gamma(\alpha)}x+S(s)Ax.$$
 Therefore, we conclude that
 \begin{eqnarray*}
 D_\lambda(t)Ax &=&\int_0^te^{\lambda(t-s)}S(s)Axds\\
  &=& \int_0^te^{\lambda(t-s)}[S'(s)x-\frac{s^{\alpha-1}}{\Gamma(\alpha)}x]ds\\
 &=& \int_0^te^{\lambda(t-s)}S'(s)xds-
 \int_0^te^{\lambda(t-s)}\frac{s^{\alpha-1}}{\Gamma(\alpha)}xds\\
 &=& S(t)x+\lambda D_\lambda(t)x-
   \int_0^te^{\lambda(t-s)}\frac{s^{\alpha-1}}{\Gamma(\alpha)}xds
     \end{eqnarray*}
Finally, we obtain for all $x\in D(A)$
$$D_\lambda(t)(\lambda-A)x= \left(\int_0^t e^{\lambda(t-s)}\frac{s^{\alpha-1}}{\Gamma(\alpha)}ds - S(t)\right)x.$$
\item  Let $\mu\in \rho(A)$. From proof of Lemma \ref{l0}, we have
for all $x\in X$ $$R(\mu,A)S(s)x=S(s)R(\mu,A)x.$$
Hence, for all $x\in X$ we conclude
\begin{eqnarray*}
R(\mu,A)D_\lambda(t)x &=& R(\mu,A)\int_0^te^{\lambda (t-s)}S(s)xds\\
   &=& \int_0^te^{\lambda (t-s)}R(\mu,A)S(s)xds\\
   &=& \int_0^te^{\lambda (t-s)}S(s)R(\mu,A)xds\\
&=& D_\lambda(t)R(\mu,A)x.
   \end{eqnarray*}
Therefore, we obtain for all $x\in X$
 \begin{eqnarray*}
 D_\lambda(t)x &=& \int_0^te^{\lambda (t-s)}S(s)xds\\
&=& \int_0^te^{\lambda (t-s)}S(s)(\mu-A)R(\mu,A)xds  \\
&=& \mu \int_0^te^{\lambda (t-s)} S(s)R(\mu,A)xds -\int_0^te^{\lambda (t-s)}S(s)AR(\mu,A)xds \\
&=& \mu \int_0^te^{\lambda (t-s)} R(\mu,A)S(s)xds -\int_0^te^{\lambda (t-s)}S(s)AR(\mu,A)xds \\
&=& \mu R(\mu,A)\int_0^te^{\lambda (t-s)} S(s)xds -\int_0^te^{\lambda (t-s)}S(s)AR(\mu,A)xds \\
&=& \mu R(\mu,A)D_\lambda(t)x-D_\lambda(t)AR(\mu,A)x\\
&=& \mu R(\mu,A)D_\lambda(t)x-\left(S(t)R(\mu,A)x+\lambda D_\lambda(t)R(\mu,A)x-\int_0^te^{\lambda(t-s)}\frac{s^{\alpha-1}}{\Gamma(\alpha)}R(\mu,A)xds\right)\\
&=& \mu R(\mu,A)D_\lambda(t)x-\left(R(\mu,A)S(t)x+\lambda R(\mu,A)D_\lambda(t)x-R(\mu,A)\int_0^te^{\lambda(t-s)}\frac{s^{\alpha-1}}{\Gamma(\alpha)}xds\right)\\ &=&  R(\mu,A)\big[(\mu-\lambda)D_\lambda(t)x-S(t)x+
\int_0^te^{\lambda(t-s)}\frac{s^{\alpha-1}}{\Gamma(\alpha)}xds\big]
\end{eqnarray*}
Therefore, for all $x\in X$ we have $D_\lambda(t)x\in D(A)$ and
$$(\mu-A)D_\lambda(t)x=(\mu-\lambda) D_\lambda(t)x+\int_0^t e^{\lambda(t-s)}\frac{s^{\alpha-1}}{\Gamma(\alpha)}xds -
S(t)x.$$
Finally, for all $x\in X$ and all $\lambda\in \mathds{C}$ we obtain
$$(\lambda-A)D_\lambda(t)x=\left(\int_0^t e^{\lambda(t-s)}\frac{s^{\alpha-1}}{\Gamma(\alpha)}ds -
S(t)\right)x.$$
\end{enumerate}
\end{proof}

\begin{lem}\label{l2} Let $A$ be the generator of an $\alpha$-times integrated semigroup $(S(t))_{t\geq 0}$ with $\alpha>0$. Then for all $\lambda\in\mathds{C}$, all $t\geq 0$ and all $x\in X$
\begin{enumerate}
\item We have the identity  $$\,\,(\lambda-A)L_\lambda(t)+\varphi_\lambda(t)D_\lambda(t)=\phi_\lambda(t)I,$$
    where $L_\lambda(t)=\int_0^t e^{-\lambda s }D_\lambda(s)ds,\, \varphi_\lambda(t)=e^{\lambda t}\,\mbox{and}\, \phi_\lambda(t)=\int_0^t\int_0^\tau e^{-\lambda r}\frac{r^{\alpha-1}}{\Gamma(\alpha)}drd\tau.$\\
Moreover, the operator $L_\lambda(t)$ is commute with each one of $D_\lambda(t)$and $(\lambda-A)$.
\item For all $n\in\mathds{N}^*,$ there exists an $L_{\lambda,n}(t)\in \mathcal{B}(X)$ such that $$(\lambda-A)L_{\lambda,n}(t)+[\varphi_\lambda(t)]^n[D_\lambda(t)]^n=
    [\phi_\lambda(t)]^nI.$$
    Moreover, the operator $L_{\lambda,n}(t)$ is commute with each one of $D_\lambda(t)$ and $\lambda-A$.
\item For all $n\in\mathds{N}^*,$ there exists an operator $D_{\lambda,n}(t)\in \mathcal{B}(X)$ such that $$(\lambda-A)^n[L_\lambda(t)]^n+D_{\lambda,n}(t)D_\lambda(t)=[\phi_\lambda(t)]^nI.$$
    Moreover, the operator $D_{\lambda,n}(t)$ is commute with each one of $D_\lambda(t)$, $L_\lambda(t)$ and $\lambda-A$.
 \item  For all $n\in\mathds{N}^*,$ there exists an operator $K_{\lambda,n}(t)\in \mathcal{B}(X)$ such that
$$ (\lambda-A)^nK_{\lambda,n}(t)+[D_{\lambda,n}(t)]^n [D_\lambda(t)]^n=[\phi_\lambda(t)]^{n^2}I,$$
Moreover, the operator $K_{\lambda,n}(t)$ is commute with each one of $D_\lambda(t)$, $D_{\lambda,n}(t)$ and $\lambda-A$.
\end{enumerate}
\end{lem}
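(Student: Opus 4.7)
\bigskip

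\textbf{Plan of proof.} The whole lemma sits on top of the single identity in Lemma \ref{l1}(1), and parts (2)--(4) are purely algebraic consequences of part (1). So my main task is to prove (1) carefully, and then to perform three successive applications of the elementary identity $a^n-b^n=(a-b)\sum_{k=0}^{n-1}a^{n-1-k}b^k$ (valid in any commutative subalgebra) to bootstrap the higher-order versions.

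For part (1), I would start from Lemma \ref{l1}(1) written in the variable $s$, namely $(\lambda-A)D_\lambda(s)x=\int_0^s e^{\lambda(s-r)}\frac{r^{\alpha-1}}{\Gamma(\alpha)}x\,dr-S(s)x$, multiply by the scalar $e^{-\lambda s}$ (which commutes with $\lambda-A$), and integrate in $s$ from $0$ to $t$. The closedness of $\lambda-A$ lets one pull it outside the integral against $L_\lambda(t)=\int_0^te^{-\lambda s}D_\lambda(s)\,ds$; the first term on the right yields $\phi_\lambda(t)I$ after swapping the integration order, and the second term $\int_0^t e^{-\lambda s}S(s)\,ds$ is recognized, via the very definition $D_\lambda(t)=e^{\lambda t}\int_0^t e^{-\lambda r}S(r)\,dr$, as a scalar multiple of $D_\lambda(t)$, producing the $\varphi_\lambda(t)D_\lambda(t)$ contribution. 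The commutation claim for $L_\lambda(t)$ is then automatic: $D_\lambda(t)$ commutes with $\lambda-A$ (this was already used in the proof of Lemma \ref{l1}(2) through $R(\mu,A)S(s)=S(s)R(\mu,A)$), and $L_\lambda(t)$ is built as a scalar integral of $D_\lambda(s)$ in the same family, so it commutes with both $D_\lambda(t)$ and $\lambda-A$.

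For part (2), I apply the factorization $[\phi_\lambda(t)]^n I-[\varphi_\lambda(t)D_\lambda(t)]^n=\bigl(\phi_\lambda(t)I-\varphi_\lambda(t)D_\lambda(t)\bigr)\sum_{k=0}^{n-1}[\phi_\lambda(t)]^{n-1-k}[\varphi_\lambda(t)D_\lambda(t)]^k$ and substitute the identity of part (1) into the first factor to pull out $(\lambda-A)$ on the left. The operator $L_{\lambda,n}(t)$ is then $L_\lambda(t)$ times the telescoping sum, and its commutation properties follow from those of $L_\lambda(t)$ and $D_\lambda(t)$. For part (3), I instead raise the identity of part (1) to the $n$-th power: since $(\lambda-A)$ and $L_\lambda(t)$ commute, $\bigl[(\lambda-A)L_\lambda(t)\bigr]^n=(\lambda-A)^n[L_\lambda(t)]^n$, and expanding $[\phi_\lambda(t)I-\varphi_\lambda(t)D_\lambda(t)]^n$ by the binomial theorem in the commutative subalgebra generated by $D_\lambda(t)$ and the scalar $\phi_\lambda(t)$ yields a leading term $[\phi_\lambda(t)]^nI$ together with a sum of terms each divisible (on one side) by $D_\lambda(t)$; factoring one $D_\lambda(t)$ out defines $D_{\lambda,n}(t)$, which is a polynomial in $D_\lambda(t)$ with scalar coefficients and therefore commutes with everything in sight. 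Part (4) is again the same $a^n-b^n$ trick, now applied to the identity of part (3) with $a=\phi_\lambda(t)I$ and $b=D_{\lambda,n}(t)D_\lambda(t)$ (these commute by the commutation statements already established), which gives $K_{\lambda,n}(t)=[L_\lambda(t)]^n\sum_{k=0}^{n-1}[\phi_\lambda(t)]^{n(n-1-k)}[D_{\lambda,n}(t)D_\lambda(t)]^k$.

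The only real obstacle is part (1): one must be careful about pulling $\lambda-A$ through the integrals (using that $\lambda-A$ is closed rather than bounded) and about the precise scalar factor produced when $\int_0^t e^{-\lambda s}S(s)\,ds$ is re-expressed in terms of $D_\lambda(t)$. Once (1) is established, parts (2)--(4) reduce to bookkeeping within the commutative subalgebra generated by $D_\lambda(t)$, $L_\lambda(t)$, $\lambda-A$ and the scalars $\varphi_\lambda(t),\phi_\lambda(t)$, and the commutation assertions propagate automatically because every operator appearing in the construction is a polynomial, with scalar coefficients, in operators already known to commute pairwise.
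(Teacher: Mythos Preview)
Your proposal is correct and follows essentially the same strategy as the paper: integrate the basic identity of Lemma~\ref{l1}(1) to obtain (1), then derive (2)--(4) by purely algebraic manipulations inside the commutative family generated by $D_\lambda(t)$, $L_\lambda(t)$, $\lambda-A$ and the scalars.

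The only notable variation is that for (2) and (4) you use the telescoping factorisation $a^n-b^n=(a-b)\sum_{k=0}^{n-1}a^{n-1-k}b^k$, whereas the paper uses the binomial expansion $(a-b)^n=\sum_{i}\binom{n}{i}a^{n-i}(-b)^i$ throughout. Both lead to operators satisfying the required identities, but your choice has a small advantage: your $L_{\lambda,n}(t)=L_\lambda(t)\sum_{k=0}^{n-1}[\phi_\lambda(t)]^{n-1-k}[\varphi_\lambda(t)D_\lambda(t)]^k$ and your $K_{\lambda,n}(t)$ are visibly polynomials in the bounded operators $L_\lambda(t),D_\lambda(t),D_{\lambda,n}(t)$, so membership in $\mathcal{B}(X)$ is immediate; the paper's formulas instead contain powers of the unbounded $\lambda-A$ (e.g.\ $L_{\lambda,n}(t)=\sum_{i=1}^n\binom{n}{i}[\phi_\lambda(t)]^{n-i}[-(\lambda-A)]^{i-1}[L_\lambda(t)]^i$), and boundedness then has to be read off from the identity $(\lambda-A)L_\lambda(t)=\phi_\lambda(t)I-\varphi_\lambda(t)D_\lambda(t)$. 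One tiny slip: in your sketch of (4) you write $a=\phi_\lambda(t)I$, but the identity of (3) has right-hand side $[\phi_\lambda(t)]^nI$, so you mean $a=[\phi_\lambda(t)]^nI$; your final formula for $K_{\lambda,n}(t)$ already carries the correct exponent $n(n-1-k)$, so this is only a typo.
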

\begin{proof}
\begin{enumerate}
\item Let $\mu\in \rho(A)$. By Lemma \ref{l1}, for all $x\in X$ we have $D_\lambda(s)x\in D(A)$ and hence
 \begin{eqnarray*}
 L_\lambda(t)x &=& \int_0^te^{-\lambda s}D_\lambda(s)xds\\
 &=& \int_0^te^{-\lambda s}R(\mu,A)(\mu-A)D_\lambda(s)xds\\
 &=& R(\mu,A)[\mu\int_0^te^{-\lambda s}D_\lambda(s)xds-\int_0^te^{-\lambda s}AD_\lambda(s)xds]\\
  &=& R(\mu,A)[\mu L_\lambda(t)x-\int_0^te^{-\lambda s}AD_\lambda(s)xds]
\end{eqnarray*}
Therefore for all $x\in X$, we have $L_\lambda(t)x\in D(A)$ and
$$(\mu-A)L_\lambda(t)x=\mu L_\lambda(t)x-\int_0^te^{-\lambda s}AD_\lambda(s)xds.$$
Thus
$$AL_\lambda(t)x=\int_0^te^{-\lambda s}AD_\lambda(s)xds.$$
Hence, we conclude that
\begin{eqnarray*}
(\lambda-A)L_\lambda(t)x&=&\lambda L_\lambda(t)x -\int_0^te^{-\lambda s}AD_\lambda(s)xds\\
&=&\lambda L_\lambda(t)x -\int_0^te^{-\lambda s}\big[\lambda D_\lambda(s)x-\int_0^s e^{\lambda(s-r)}\frac{r^{\alpha-1}}{\Gamma(\alpha)}xdr +
S(s)x\big]ds\\
&=&\lambda L_\lambda(t)x -\lambda\int_0^te^{-\lambda s}D_\lambda(s)x ds +\int_0^te^{-\lambda s}\int_0^s e^{\lambda(s-r)}\frac{r^{\alpha-1}}{\Gamma(\alpha)}xdrds -\int_0^te^{-\lambda s}
S(s)xds\\
&=&\lambda L_\lambda(t)x -\lambda L_\lambda(t)x +\int_0^t\int_0^s e^{-\lambda r}\frac{r^{\alpha-1}}{\Gamma(\alpha)}xdrds -e^{-\lambda t}\int_0^te^{\lambda (t-s)}S(s)xds  \\
&=&\int_0^t\int_0^s e^{-\lambda r}\frac{r^{\alpha-1}}{\Gamma(\alpha)}xdrds -e^{-\lambda t}D_\lambda (t)x  \\
 &=& \big[\phi_\lambda(t)I-\varphi_\lambda(t)D_\lambda(t)\big]x,
\end{eqnarray*}
where $\phi_\lambda(t)=\int_0^t\int_0^s e^{-\lambda r}\frac{r^{\alpha-1}}{\Gamma(\alpha)}drds$ and $\varphi_\lambda(t)=e^{-\lambda t}.$\\
Therefore, we obtain $$(\lambda-A)L_\lambda(t)+\varphi_\lambda(t)D_\lambda(t)=\phi_\lambda(t)I.$$
Since $S(s)S(t)=S(t)S(s)$ for all $s,t\geq 0$, then
 $D_\lambda(s)S(t)=S(t)D_\lambda(s).$\\
Hence
\begin{eqnarray*}
D_\lambda(t)D_\lambda(s) &=& \int_0^te^{\lambda(t-r)}S(r)D_\lambda(s)dr\\
&=& \int_0^te^{\lambda(t-r)}S(r)D_\lambda(s)dr\\
&=& \int_0^te^{\lambda(t-r)}D_\lambda(s)S(r)dr\\
&=& D_\lambda(s)\int_0^te^{\lambda(t-r)}S(r)dr\\
&=& D_\lambda(s)D_\lambda(t).
\end{eqnarray*}
Thus, we deduce that
\begin{eqnarray*}
D_\lambda(t)L_\lambda(t) &=& D_\lambda(t)\int_0^te^{-\lambda s}D_\lambda(s)ds\\
&=& \int_0^te^{-\lambda s}D_\lambda(t)D_\lambda(s)ds\\
&=& \int_0^te^{-\lambda s}D_\lambda(s)D_\lambda(t)ds\\
&=& \int_0^te^{-\lambda s}D_\lambda(s)dsD_\lambda(t)\\
&=& L_\lambda(t)D_\lambda(t).
\end{eqnarray*}
Since for all $x\in X$ $AL_\lambda(t)x=\int_0^te^{-\lambda s}AD_\lambda(s)xds$ and for all $x\in D(A)$ $AD_\lambda(s)x=D_\lambda(s)Ax,$
then we obtain for all $x\in D(A)$
\begin{eqnarray*}
(\lambda-A)L_\lambda(t)x &=& \lambda L_\lambda(t)x-AL_\lambda(t)x\\
&=& \lambda L_\lambda(t)x -\int_0^te^{-\lambda s}AD_\lambda(s)xds\\
&=& \lambda L_\lambda(t)x -\int_0^te^{-\lambda s}AD_\lambda(s)xds\\
&=&\lambda L_\lambda(t)x -\int_0^te^{-\lambda s}D_\lambda(s)Axds\\
&=& \lambda L_\lambda(t)x- L_\lambda(t)Ax\\
&=& L_\lambda(t)(\lambda-A)x.
\end{eqnarray*}
\item Since $(\lambda-A)L_\lambda(t)+\varphi_\lambda(t)D_\lambda(t)=\phi_\lambda(t)I$, then for all $n\in \mathds{N}^*$ we obtain
\begin{eqnarray*}
[\varphi_\lambda(t)D_\lambda(t)]^n &=&[\phi_\lambda(t)I-(\lambda-A)L_\lambda(t)]^n\\
&=&\sum_{i=0}^n C_n^i[\phi_\lambda(t)]^{n-i}[-(\lambda-A)L_\lambda(t)]^i\\
&=& [\phi_\lambda(t)]^nI -(\lambda-A)\sum_{i=1}^n C_n^i[\phi_\lambda(t)]^{n-i}[-(\lambda-A)]^{i-1}[L_\lambda(t)]^i\\
&=& [\phi_\lambda(t)]^nI -(\lambda-A)L_{\lambda,n}(t),
\end{eqnarray*}
where $$L_{\lambda,n}(t)=\sum_{i=1}^nC_n^i [\phi_\lambda(t)]^{n-i}[-(\lambda-A)]^{i-1}[L_\lambda(t)]^i.$$
Therefore, we have
$$(\lambda-A)L_{\lambda,n}(t)+[\varphi_\lambda(t)]^n[D_\lambda(t)]^n =[\phi_\lambda(t)]^nI.$$
Finally, for commutativity, it is clear that $L_{\lambda,n}(t)$ commute with each one of $D_\lambda(t)$ and $\lambda-A$.
\item For all $n\in \mathds{N}^*$, we obtain
\begin{eqnarray*}
[(\lambda-A)L_\lambda(t)]^n &=&[\phi_\lambda(t)I-\varphi_\lambda(t)D_\lambda(t)]^n\\
&=&\sum_{i=0}^n C_n^i[\phi_\lambda(t)]^{n-i}[-\varphi_\lambda(t)D_\lambda(t)]^i\\
&=& [\phi_\lambda(t)]^nI -D_\lambda(t)\sum_{i=1}^nC_n^i [\phi_\lambda(t)]^{n-i}[\varphi_\lambda(t)]^{i}[-D_\lambda(t)]^{i-1}\\
&=& [\phi_\lambda(t)]^nI -D_\lambda(t)D_{\lambda,n}(t),
\end{eqnarray*}
where $$D_{\lambda,n}(t)=\sum_{i=1}^n C_n^i [\phi_\lambda(t)]^{n-i}[\varphi_\lambda(t)]^{i}[-D_\lambda(t)]^{i-1}.$$
Therefore, we have
$$(\lambda-A)^n[L_\lambda(t)]^n+D_\lambda(t)D_{\lambda,n}(t)=[\phi_\lambda(t)]^nI.$$
Finally, for commutativity, it is clear that $D_{\lambda,n}(t)$ commute with each one of $D_\lambda(t)$, $L_\lambda(t)$ and $\lambda-A$.
\item
Since we have
$D_\lambda(t)D_{\lambda,n}(t)=[\phi_\lambda(t)]^nI-(\lambda-A)^n[L_\lambda(t)]^n,$
then for all $n\in \mathds{N}$
\begin{eqnarray*}
[D_\lambda(t)D_{\lambda,n}(t)]^n &=& \big[[\phi_\lambda(t)]^nI-(\lambda-A)^n[L_\lambda(t)]^n\big]^n\\
 &=& [\phi_\lambda(t)]^{n^2}I-\sum_{i=1}^n C_n^i \big[[\phi_\lambda(t)]^{n}\big]^{n-i}\big[(\lambda-A)^n[L_\lambda(t)]^n\big]^i\\
 &=& [\phi_\lambda(t)]^{n^2}I-(\lambda-A)^n\sum_{i=1}^nC_n^i [\phi_\lambda(t)]^{n(n-i)}(\lambda-A)^{n(i-1)}[L_\lambda(t)]^{ni}\\
&=& [\phi_\lambda(t)]^{n^2}I-(\lambda-A)^nK_{\lambda,n}(t),
\end{eqnarray*}
where $K_{\lambda,n}(t)=\sum_{i=1}^nC_n^i
[\phi_\lambda(t)]^{n(n-i)}(\lambda-A)^{n(i-1)}[L_\lambda(t)]^{ni}.$
Hence we obtain
$$[D_\lambda(t)]^n[D_{\lambda,n}(t)]^n +(\lambda-A)^nK_{\lambda,n}(t) =[\phi_\lambda(t)]^{n^2}I.$$
Finally, the commutativity is clear.
\end{enumerate}
\end{proof}

We start by this result.
\begin{prop}\label{p1} Let $A$ be the generator of an $\alpha$-times integrated semigroup $(S(t))_{t\geq 0}$ with $\alpha>0$. For all $\lambda\in\mathds{C}$ and all $t\geq 0$, if $R\left(\int_0^te^{\lambda (t-s)}\frac{s^{\alpha-1}}{\Gamma(\alpha)}ds-S(t)\right)$ is closed, then $\lambda-A$ is also closed.
\end{prop}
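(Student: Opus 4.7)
The plan is to deduce range-closedness of $T:=\lambda-A$ from the bounded factorizations between $T$ and the operators $D_\lambda(t)$, $L_\lambda(t)$ supplied by Lemmas \ref{l1}--\ref{l2}. Writing $\psi_\lambda(t):=\int_0^t e^{\lambda(t-s)}\frac{s^{\alpha-1}}{\Gamma(\alpha)}\,ds\cdot I-S(t)\in\mathcal{B}(X)$, Lemma \ref{l1}(1) provides the factorization $\psi_\lambda(t)=T\,D_\lambda(t)$ on all of $X$ (in particular $D_\lambda(t)X\subseteq D(A)$), so the inclusion $R(\psi_\lambda(t))\subseteq R(T)$ is free. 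The content of the proposition is the reverse direction $\overline{R(T)}\subseteq R(T)$.

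I would take $y\in\overline{R(T)}$, say $y=\lim_n y_n$ with $y_n=Tx_n$ and $x_n\in D(A)$, and apply $T$ to the identity of Lemma \ref{l2}(1) evaluated at $x_n$. This is legitimate since $L_\lambda(t)X\subseteq D(A)$ and $D_\lambda(t)X\subseteq D(A)$, and using $TL_\lambda(t)=L_\lambda(t)T$ on $D(A)$ together with $TD_\lambda(t)=\psi_\lambda(t)$ yields
\[
\phi_\lambda(t)\,y_n=TL_\lambda(t)\,y_n+\varphi_\lambda(t)\,\psi_\lambda(t)\,x_n.
\]
Because $TL_\lambda(t)=\phi_\lambda(t)I-\varphi_\lambda(t)D_\lambda(t)$ is bounded, $TL_\lambda(t)y_n\to TL_\lambda(t)y$, and hence the sequence $\varphi_\lambda(t)\psi_\lambda(t)x_n\in R(\psi_\lambda(t))$ converges. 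By the closedness hypothesis its limit has the form $\psi_\lambda(t)z$ for some $z\in X$, and substituting $\psi_\lambda(t)z=T D_\lambda(t)z$ gives
\[
\phi_\lambda(t)\,y=T\bigl(L_\lambda(t)y+D_\lambda(t)z\bigr)\in R(T),
\]
with the bracketed vector lying in $D(A)$.

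The one remaining step is to cancel the scalar $\phi_\lambda(t)$, and this is where I expect the only real obstacle. Since range-closedness of $T$ depends on $T$ alone and not on the auxiliary parameter $t$, I may argue at a $t>0$ chosen so that $\phi_\lambda(t)\neq 0$; the asymptotics $\phi_\lambda(t)\sim t^{\alpha+1}/\Gamma(\alpha+2)$ as $t\to 0^+$, valid because $\alpha>0$, ensure such $t$ exist in every right neighbourhood of the origin. For any such $t$, division yields $y\in R(T)$, and hence $R(T)=\overline{R(T)}$, as required. Apart from verifying this nonvanishing, the argument is a direct manipulation of the bounded factorization algebra already packaged in Lemmas \ref{l1}--\ref{l2}.
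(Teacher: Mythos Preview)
Your argument follows the paper's route almost verbatim: both proofs use the identity $(\lambda-A)L_\lambda(t)+\varphi_\lambda(t)D_\lambda(t)=\phi_\lambda(t)I$ from Lemma~\ref{l2}(1) together with the factorization $\psi_\lambda(t)=(\lambda-A)D_\lambda(t)$ from Lemma~\ref{l1} to show that $\phi_\lambda(t)y_n-(\lambda-A)L_\lambda(t)y_n\in R(\psi_\lambda(t))$, pass to the limit using boundedness of $(\lambda-A)L_\lambda(t)$ and the closedness hypothesis, and then rewrite the limit as $(\lambda-A)$ applied to something. Your slightly different bookkeeping (applying $T$ to the identity at $x_n$ rather than evaluating it directly at $y_n$) leads to the same key equation.

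There is, however, a genuine slip in your final step. The parameter $t$ is \emph{not} auxiliary: it is fixed by the hypothesis, since what you are assuming is that $R(\psi_\lambda(t))$ is closed for \emph{that particular} $t$. You therefore cannot ``argue at a $t>0$ chosen so that $\phi_\lambda(t)\neq 0$''; the asymptotics $\phi_\lambda(t)\sim t^{\alpha+1}/\Gamma(\alpha+2)$ only tell you that $\phi_\lambda$ is nonzero for \emph{some} small $t$, which is irrelevant if the given $t$ is not among them. The paper simply asserts that $\phi_\lambda(t)\neq 0$ for every $t>0$ and divides. Your argument becomes correct once you replace the ``choose $t$'' manoeuvre by a direct verification (or assertion) that $\phi_\lambda(t)\neq 0$ at the given $t>0$; the case $t=0$ is vacuous anyway since then $\psi_\lambda(0)=0$.
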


\begin{proof}
Let $(y_n)_{n\in\mathds{N}}\subseteq X$ such that $y_n\rightarrow y\in X$ and there exists $(x_n)_{n\in\mathds{N}}\subseteq D(A)$ satisfying
      $$(\lambda-A)x_n=y_n.$$
      By Lemma \ref{l2}, we obtain
      $$(\lambda-A)L_\lambda(t)y_n+G_\lambda(t)D_\lambda(t)y_n=\phi_\lambda(t)y_n.$$
Hence, we conclude that
\begin{eqnarray*}
\left(\int_0^te^{\lambda (t-s)}\frac{s^{\alpha-1}}{\Gamma(\alpha)}ds-S(t)\right)G_\lambda(t)x_n
&=& D_\lambda(t)(\lambda -A)G_\lambda(t)x_n\\
&=& G_\lambda(t)D_\lambda(t)(\lambda -A)x_n\\
&=& G_\lambda(t)D_\lambda(t)y_n\\
&=& \phi_\lambda(t)y_n-(\lambda-A)L_\lambda(t)y_n.
\end{eqnarray*}
Thus, $$\phi_\lambda(t)y_n-(\lambda-A)L_\lambda(t)y_n \in R\left(\int_0^te^{\lambda (t-s)}\frac{s^{\alpha-1}}{\Gamma(\alpha)}ds-S(t)\right).$$
Therefore, since $R\left(\int_0^te^{\lambda (t-s)}\frac{s^{\alpha-1}}{\Gamma(\alpha)}ds-S(t)\right)$ is closed, $L_\lambda(t)$ is bounded linear and
$\phi_\lambda(t)y_n-(\lambda-A)L_\lambda(t)y_n$ converges to  $\phi_\lambda(t)y-(\lambda-A)L_\lambda(t)y,$ we conclude that
$$\phi_\lambda(t)y-(\lambda-A)L_\lambda(t)y\in R\left(\int_0^te^{\lambda (t-s)}\frac{s^{\alpha-1}}{\Gamma(\alpha)}ds-S(t)\right).$$
Then there exists $z\in X$ such that
$$\left(\int_0^te^{\lambda (t-s)}\frac{s^{\alpha-1}}{\Gamma(\alpha)}ds-S(t)\right)z=\phi_\lambda(t)y-(\lambda-A)L_\lambda(t)y.$$
Hence for all $t\neq 0$, we have $\phi_\lambda(t)\neq 0$ and
\begin{eqnarray*}
y &=& \frac{1}{\phi_\lambda(t)}\left([\int_0^te^{\lambda (t-s)}\frac{s^{\alpha-1}}{\Gamma(\alpha)}ds-S(t)]z+(\lambda-A)L_\lambda(t)y\right);\\
&=& \frac{1}{\phi_\lambda(t)}\left((\lambda-A)D_\lambda(t)z+(\lambda-A)L_\lambda(t)y\right);\\
&=& \frac{1}{\phi_\lambda(t)}(\lambda-A)[D_\lambda(t)z+L_\lambda(t)y].\\
\end{eqnarray*}
Finally, we obtain $$y\in R(\lambda-A).$$
\end{proof}

The following result discusses the semi-Fredholm spectrum.
\begin{thm}\label{t1} Let $A$ be the generator of an $\alpha$-times integrated semigroup $(S(t))_{t\geq 0}$ with $\alpha>0$. Then for all $t\geq 0$
\begin{enumerate}
\item $\int_0^t e^{(t-s)\sigma_{e_+}(A)}\frac{s^{\alpha-1}}{\Gamma(\alpha)}ds\subseteq \sigma_{e_+}(S(t));$
\item $\int_0^t e^{(t-s)\sigma_{e_-}(A)}\frac{s^{\alpha-1}}{\Gamma(\alpha)}ds\subseteq \sigma_{e_-}(S(t));$
\item $\int_0^t e^{(t-s)\sigma_{e_\pm}(A)}\frac{s^{\alpha-1}}{\Gamma(\alpha)}ds\subseteq
 \sigma_{e_\pm}(S(t)).$
\end{enumerate}
\end{thm}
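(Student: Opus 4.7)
The plan is to prove each inclusion contrapositively, leaning on the factorization
$$(\lambda-A)D_\lambda(t)=D_\lambda(t)(\lambda-A)=\mu_\lambda(t)I-S(t)$$
with $\mu_\lambda(t):=\int_0^t e^{\lambda(t-s)}\frac{s^{\alpha-1}}{\Gamma(\alpha)}ds$, as established in Lemma \ref{l1}. Fixing $\lambda\in\mathds{C}$, it suffices to show that whenever $\mu_\lambda(t)I-S(t)$ is upper (resp.\ lower) semi-Fredholm, so is $\lambda-A$; the three parts of the theorem then follow from the set-theoretic relations between the corresponding spectral classes.

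For part (1), assume $\mu_\lambda(t)I-S(t)\in\Phi_+(X)$. Any $x\in D(A)$ with $(\lambda-A)x=0$ satisfies $(\mu_\lambda(t)I-S(t))x=D_\lambda(t)(\lambda-A)x=0$, which gives the kernel inclusion $N(\lambda-A)\subseteq N(\mu_\lambda(t)I-S(t))$ and hence $\dim N(\lambda-A)<\infty$. The closed-range condition on $\lambda-A$ is precisely what Proposition \ref{p1} delivers from the closedness of $R(\mu_\lambda(t)I-S(t))$. Therefore $\lambda-A\in\Phi_+$, which is the contrapositive of (1).

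For part (2), assume $\mu_\lambda(t)I-S(t)\in\Phi_-(X)$. Because $D_\lambda(t)X\subseteq D(A)$ (implicit in Lemma \ref{l1} and made explicit in the proof of Lemma \ref{l2}), the factorization gives $R(\mu_\lambda(t)I-S(t))=(\lambda-A)D_\lambda(t)X\subseteq R(\lambda-A)$. Hence $\mathrm{codim}\,R(\lambda-A)\leq\mathrm{codim}\,R(\mu_\lambda(t)I-S(t))<\infty$, so $\lambda-A\in\Phi_-$. Part (3) is then immediate: since $\Phi_\pm=\Phi_+\cup\Phi_-$, one has $\sigma_{e_\pm}=\sigma_{e_+}\cap\sigma_{e_-}$, so if $\lambda\in\sigma_{e_\pm}(A)$, then $\mu_\lambda(t)$ lies in both $\sigma_{e_+}(S(t))$ and $\sigma_{e_-}(S(t))$, and therefore in $\sigma_{e_\pm}(S(t))$.

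The only step that demands real work is the closed-range transfer in part (1), and that is already packaged in Proposition \ref{p1}, whose proof rests on the auxiliary identity $(\lambda-A)L_\lambda(t)+\varphi_\lambda(t)D_\lambda(t)=\phi_\lambda(t)I$ from Lemma \ref{l2}. Once this is in hand, the semi-Fredholm statements collapse to one-line kernel or range inclusions read directly off the factorization, so I anticipate no further obstacle.
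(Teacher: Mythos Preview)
Your proposal is correct and follows essentially the same route as the paper: both argue contrapositively, using the kernel inclusion $N(\lambda-A)\subseteq N(\mu_\lambda(t)I-S(t))$ together with Proposition~\ref{p1} for part~(1), the range inclusion $R(\mu_\lambda(t)I-S(t))\subseteq R(\lambda-A)$ for part~(2), and derive part~(3) immediately from the first two. The only cosmetic difference is that you spell out the identity $\sigma_{e_\pm}=\sigma_{e_+}\cap\sigma_{e_-}$ in part~(3), whereas the paper simply says ``automatic.''
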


\begin{proof}
\begin{enumerate}
\item Suppose that $\int_0^te^{\lambda(t-s)}\frac{s^{\alpha-1}}{\Gamma(\alpha)}ds\notin\sigma_{e_+}(S(t))$, then there exists $n\in\mathds{N}$ such that
$\delta\left(\int_0^t e^{\lambda(t-s)}\frac{s^{\alpha-1}}{\Gamma(\alpha)}ds-S(t)\right)=n$ and
$R\left(\int_0^t e^{\lambda(t-s)}\frac{s^{\alpha-1}}{\Gamma(\alpha)}ds-S(t)\right)$ is closed.\\
By Lemma \ref{l1}, we obtain $$N(\lambda-A)\subset N\left(\int_0^t e^{\lambda(t-s)}\frac{s^{\alpha-1}}{\Gamma(\alpha)}ds-S(t)\right),$$
 then $$\delta(\lambda-A)\leq n.$$
On the other hand, from Proposition \ref{p1}, we deduce that $R(\lambda-A)$ is closed.
Therefore $$\lambda\notin \sigma_{e_+}(A).$$
\item Suppose that $\int_0^t e^{\lambda(t-s)}\frac{s^{\alpha-1}}{\Gamma(\alpha)}ds\notin\sigma_{e_-}(S(t))$, then there exist $n\in\mathds{N}$ such that
$\beta\left(\int_0^t e^{\lambda(t-s)}\frac{s^{\alpha-1}}{\Gamma(\alpha)}ds-S(t)\right)=n$.\\
By Lemma \ref{l1}, we obtain $$ R\left(\int_0^t e^{\lambda(t-s)}\frac{s^{\alpha-1}}{\Gamma(\alpha)}ds-S(t)\right)\subseteq R(\lambda-A),$$
 then $\beta(\lambda-A)\leq n$ and hence $$\lambda\notin\sigma_{e_-}(A).$$
\item It is automatic by the previous assertions of this theorem.
\end{enumerate}
\end{proof}

\begin{prop}\label{p2} Let $A$ be the generator of an $\alpha$-times integrated semigroup $(S(t))_{t\geq 0}$ and $\alpha>0$. Then for all $\lambda\in\mathds{C}$ and all $t\geq 0$, we have
\begin{enumerate}
\item $d\left(\int_0^te^{\lambda (t-s)}\frac{s^{\alpha+1}}{\Gamma(\alpha+)}ds-S(t)\right)=n,$ then  $d(\lambda-A)\leq n.$
\item $a\left(\int_0^te^{\lambda (t-s)}\frac{s^{\alpha+1}}{\Gamma(\alpha)}ds-S(t)\right)=n,$ then  $a(\lambda-A)\leq n.$
\end{enumerate}
\end{prop}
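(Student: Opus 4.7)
The plan is to reduce both parts to the single factorization $F_\lambda(t)^k = (\lambda-A)^k [D_\lambda(t)]^k$ combined with the decomposition
$$\phi_\lambda(t)^n I = (\lambda-A) L_{\lambda,n}(t) + [\varphi_\lambda(t)]^n [D_\lambda(t)]^n$$
from Lemma \ref{l2}(2), where I write $F_\lambda(t) := \int_0^t e^{\lambda(t-s)}\frac{s^{\alpha-1}}{\Gamma(\alpha)}\,ds - S(t)$. The identities $F_\lambda(t) = (\lambda-A)D_\lambda(t)$ on $X$ and $F_\lambda(t) = D_\lambda(t)(\lambda-A)$ on $D(A)$ come directly from Lemma \ref{l1}, and iterating them (using that $D_\lambda(t)$ sends $X$ into $D(A)$ and commutes with $\lambda-A$ on $D(A)$) delivers the $k$-th power form on suitable domains.

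For part (1), I would take $y = (\lambda-A)^n x$ with $x \in D(A^n)$, feed $x$ into the decomposition, and apply $(\lambda-A)^n$ to both sides to get
$$\phi_\lambda(t)^n y = (\lambda-A)^{n+1} L_{\lambda,n}(t) x + [\varphi_\lambda(t)]^n F_\lambda(t)^n x.$$
The hypothesis $d(F_\lambda(t))=n$ then yields $F_\lambda(t)^n x = F_\lambda(t)^{n+1} u$ for some $u\in X$, and the power factorization rewrites this as $(\lambda-A)^{n+1}[D_\lambda(t)]^{n+1}u$, placing the second term into $R((\lambda-A)^{n+1})$. Dividing by $\phi_\lambda(t)^n$ (nonzero for suitable $t>0$) forces $y\in R((\lambda-A)^{n+1})$, so $d(\lambda-A)\leq n$.

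For part (2), I would take $x\in N((\lambda-A)^{n+1})\subseteq D(A^{n+1})$. The power factorization gives $F_\lambda(t)^{n+1} x = [D_\lambda(t)]^{n+1}(\lambda-A)^{n+1}x = 0$, so the hypothesis $a(F_\lambda(t)) = n$ implies $F_\lambda(t)^n x = [D_\lambda(t)]^n(\lambda-A)^n x = 0$. Feeding $(\lambda-A)^n x$ into the decomposition, and using the commutativity $(\lambda-A)L_{\lambda,n}(t) y = L_{\lambda,n}(t)(\lambda-A) y$ for $y\in D(A)$ (applicable because $(\lambda-A)^n x \in D(A)$), produces
$$\phi_\lambda(t)^n (\lambda-A)^n x = L_{\lambda,n}(t)(\lambda-A)^{n+1}x + [\varphi_\lambda(t)]^n [D_\lambda(t)]^n(\lambda-A)^n x = 0.$$
Dividing by $\phi_\lambda(t)^n$ yields $(\lambda-A)^n x = 0$, so $a(\lambda-A)\leq n$.

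The main obstacle I expect is keeping the domain bookkeeping honest while iterating $F_\lambda(t) = (\lambda-A)D_\lambda(t)$ and commuting $L_{\lambda,n}(t)$ past $(\lambda-A)$; these do reduce to the commutation relations already proven in Lemmas \ref{l1} and \ref{l2}, but they must be invoked at the right moment, once $(\lambda-A)^n x$ is known to lie in $D(A)$. A second, smaller point is verifying that $\phi_\lambda(t)\neq 0$ before dividing; by Fubini, $\phi_\lambda(t) = \int_0^t e^{-\lambda r}\frac{r^{\alpha-1}}{\Gamma(\alpha)}(t-r)\,dr$, which is equivalent to $t^{\alpha+1}/\Gamma(\alpha+2)$ as $t\to 0^+$, so nonvanishing is automatic for sufficiently small positive $t$, and this is enough to transfer the ascent/descent bound to $\lambda-A$.
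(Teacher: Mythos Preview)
Your proposal is correct and follows the same route as the paper's own proof: both feed the decomposition $(\lambda-A)L_{\lambda,n}(t)+[\varphi_\lambda(t)]^n[D_\lambda(t)]^n=[\phi_\lambda(t)]^nI$ from Lemma~\ref{l2}(2) through $(\lambda-A)^n$ and use the factorization $F_\lambda(t)=(\lambda-A)D_\lambda(t)=D_\lambda(t)(\lambda-A)$ exactly as you describe. The only wrinkle is your side remark that nonvanishing of $\phi_\lambda(t)$ for \emph{small} $t$ suffices---since the hypothesis fixes $t$, you actually need $\phi_\lambda(t)\neq 0$ at that particular $t$; the paper simply asserts this for all $t>0$ without proof, so your treatment is no looser than the original.
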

\begin{proof}
$\,$\\
\begin{enumerate}
  \item Let $y\in R(\lambda-A)^n$, then there exists $x\in D(A^n)$ satisfying $$(\lambda-A)^nx=y.$$
       Since $d\left(\int_0^te^{\lambda (t-s)}\frac{s^{\alpha+1}}{\Gamma(\alpha)}ds-S(t)\right)=n,$ therefore
      $$R\left(\int_0^te^{\lambda (t-s)}\frac{s^{\alpha+1}}{\Gamma(\alpha)}ds-S(t)\right)^n=R\left(\int_0^te^{\lambda (t-s)}\frac{s^{\alpha+1}}{\Gamma(\alpha)}ds-S(t)\right)^{n+1}.$$
      Hence there exists $z\in X$ such that
     $$ \left(\int_0^te^{\lambda (t-s)}\frac{s^{\alpha+1}}{\Gamma(\alpha)}ds-S(t)\right)^{nx}=\left(\int_0^te^{\lambda (t-s)}\frac{s^{\alpha+1}}{\Gamma(\alpha)}ds-S(t)\right)^{n+1}z.$$
On the other hand, by Lemma \ref{l2}, we have $$(\lambda-A)L_{\lambda,n}(t)+[\varphi_\lambda(t)]^n[D_\lambda(t)]^n=[\phi_\lambda(t)]^nI,$$
     with $L_{\lambda,n}(t)$, $D_\lambda(t)$ and $(\lambda-A)$ are pairwise commute.\\
Thus, we have
     \begin{eqnarray*}
     [\phi_\lambda(t)]^ny&=&(\lambda-A)^n[\phi_\lambda(t)]^nx\\
     &=& (\lambda-A)^n\left(\lambda-A)L_{\lambda,n}(t)+[\varphi_\lambda(t)\right)^n[D_\lambda(t)]^n]x\\
     &=&(\lambda-A)^n(\lambda-A)L_{\lambda,n}(t)x+[\varphi_\lambda(t)]^n
     (\lambda-A)^n[D_\lambda(t)^n]x\\
     &=&(\lambda-A)^{n+1}L_{\lambda,n}(t)x+[\varphi_\lambda(t)]^n\left(\int_0^te^{\lambda (t-s)}\frac{s^{\alpha-1}}{\Gamma(\alpha)}ds-S(t)\right)^nx\\
     &=&(\lambda-A)^{n+1}L_{\lambda,n}(t)x+[\varphi_\lambda(t)]^n\left(\int_0^te^{\lambda (t-s)}\frac{s^{\alpha-1}}{\Gamma(\alpha)}ds-S(t)\right)^{n+1}z\\
     &=&(\lambda-A)^{n+1}L_{\lambda,n}(t)x+[\varphi_\lambda(t)]^n\left((\lambda-A)^{n+1}
     [D_\lambda(t)]^{n+1}z\right)\\
     &=&(\lambda-A)^{n+1}\left(L_{\lambda,n}(t)x+[\varphi_\lambda(t)]^n[D_\lambda(t)]^{n+1}z\right).
     \end{eqnarray*}
Since $\phi_\lambda(t)\neq 0$ for $t>0$, we conclude that $y\in R(\lambda-A)^{n+1}$ and hence  $$R(\lambda-A)^n=R(\lambda-A)^{n+1}.$$
Finally, we conclude that $$d(\lambda-A)\leq n.$$
  \item Let $x\in N(\lambda-A)^{n+1}$ and we suppose that $a\left(\int_0^te^{\lambda (t-s)}\frac{s^{\alpha-1}}{\Gamma(\alpha)}ds-S(t)\right)=n$, then we obtain
$$N\left(\int_0^te^{\lambda(t-s)}\frac{s^{\alpha-1}}{\Gamma(\alpha)}ds-S(t)\right)^n=
N\left(\int_0^te^{\lambda (t-s)}\frac{s^{\alpha-1}}{\Gamma(\alpha)}ds-S(t)\right)^{n+1}.$$
From Lemma \ref{l1}, we deduce that
$$N(\lambda-A)^{n+1}\subseteq N\left(\int_0^te^{\lambda (t-s)}\frac{s^{\alpha-1}}{\Gamma(\alpha)}ds-S(t)\right)^{n+1},$$
hence $$x\in N\left(\int_0^te^{\lambda (t-s)}\frac{s^{\alpha-1}}{\Gamma(\alpha)}ds-S(t)\right)^n.$$
Thus, we have
   \begin{eqnarray*}
   {[\phi_\lambda(t)]^n}(\lambda-A)^nx &=& (\lambda-A)^n[(\lambda-A)L_{\lambda,n}(t)+[\varphi_\lambda(t)]^n[D_\lambda(t)]^n]x;\\
     &=&(\lambda-A)^n(\lambda-A)L_{\lambda,n}(t)x+
     [\varphi_\lambda(t)]^n(\lambda-A)^n[D_\lambda(t)]^nx\\
     &=&(\lambda-A)^{n+1}L_{\lambda,n}(t)x+[\varphi_\lambda(t)]^n\left(\int_0^te^{\lambda (t-s)}\frac{s^{\alpha-1}}{\Gamma(\alpha)}ds-S(t)\right)^nx\\
     &=&(\lambda-A)^{n+1}L_{\lambda,n}(t)x\\
     &=&L_{\lambda,n}(t)(\lambda-A)^{n+1}x\\
     &=& 0.
    \end{eqnarray*}
    Therefore, since $\phi_\lambda(t)\neq 0$ for $t>0$, we obtain $x\in N(\lambda-A)^n$ and hence $$a(\lambda-A)\leq n.$$
\end{enumerate}
\end{proof}

The following theorem examines the semi-Browder spectrum.
\begin{thm} Let $A$ be the generator of an $\alpha$-times integrated semigroup $(S(t))_{t\geq 0}$ with $\alpha>0$. Then for all $t\geq 0$
\begin{enumerate}
\item $\int_0^t e^{(t-s)\sigma_{{Br}_+}(A)}\frac{s^{\alpha-1}}{\Gamma(\alpha)}ds\subseteq \sigma_{{Br}_+}(S(t));$
\item $\int_0^t e^{(t-s)\sigma_{{Br}_-}(A)}\frac{s^{\alpha-1}}{\Gamma(\alpha)}ds\subseteq \sigma_{{Br}_-}(S(t));$
\item $\int_0^t e^{(t-s)\sigma_{{Br}_\pm}(A)}\frac{s^{\alpha-1}}{\Gamma(\alpha)}ds\subseteq \sigma_{{Br}_\pm}(S(t)).$
 \end{enumerate}
\end{thm}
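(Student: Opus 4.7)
The plan is to prove all three inclusions by contraposition, using Theorem \ref{t1} together with Proposition \ref{p2} as the two ingredients. Write $f_\lambda(t) := \int_0^t e^{\lambda(t-s)}\frac{s^{\alpha-1}}{\Gamma(\alpha)}ds$ for brevity, so the points in the left-hand side of each inclusion are $f_\lambda(t)$ with $\lambda$ running through the corresponding spectrum of $A$. The goal is to show: if $f_\lambda(t)\notin \sigma_{Br_+}(S(t))$ (resp.\ $\sigma_{Br_-}(S(t))$) then $\lambda\notin \sigma_{Br_+}(A)$ (resp.\ $\sigma_{Br_-}(A)$).

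For assertion (1), assume $f_\lambda(t)\notin \sigma_{Br_+}(S(t))$. By the definition of $Br_+$, this means $f_\lambda(t)-S(t)\in \Phi_+(X)$ \emph{and} $a(f_\lambda(t)-S(t))<\infty$. The semi-Fredholm part gives, via Theorem \ref{t1}(1) applied contrapositively, that $\lambda-A\in\Phi_+(X)$. The finite-ascent part gives, via Proposition \ref{p2}(2), that $a(\lambda-A)\le a(f_\lambda(t)-S(t))<\infty$. Combining the two, $\lambda-A\in Br_+(X)$, i.e.\ $\lambda\notin\sigma_{Br_+}(A)$, which is exactly what is needed.

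Assertion (2) is entirely parallel: from $f_\lambda(t)-S(t)\in Br_-(X)$ we extract $f_\lambda(t)-S(t)\in\Phi_-(X)$ and $d(f_\lambda(t)-S(t))<\infty$; Theorem \ref{t1}(2) yields $\lambda-A\in\Phi_-(X)$, and Proposition \ref{p2}(1) gives $d(\lambda-A)<\infty$, so $\lambda-A\in Br_-(X)$. Assertion (3) is then immediate from (1) and (2), since $Br_\pm(X)=Br_+(X)\cup Br_-(X)$ implies that the complement of $\sigma_{Br_\pm}$ is the intersection of complements of $\sigma_{Br_+}$ and $\sigma_{Br_-}$, and both inclusions carry over accordingly.

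There is no real obstacle here — the two preceding results were engineered to deliver exactly the two pieces ($\Phi_\pm$-membership and finiteness of $a$ or $d$) that together constitute semi-Browderness. The only minor point to be careful about is the case $t=0$: then $f_\lambda(0)-S(0)=0$ and the integral on the left is $\{0\}$, so one should either restrict to $t>0$ in the argument (as in Proposition \ref{p2}, where $\phi_\lambda(t)\neq 0$ is used) or note that the $t=0$ case is trivially contained in $\sigma_{Br_\pm}(0)=\{0\}$ whenever $X$ is infinite-dimensional.
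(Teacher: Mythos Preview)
Your proof is correct and follows essentially the same approach as the paper: contraposition combining the semi-Fredholm inclusions (via Theorem~\ref{t1}, which the paper unpacks into Lemma~\ref{l1} and Proposition~\ref{p1}) with the ascent/descent bounds of Proposition~\ref{p2}. One minor slip in part~(3): since $\sigma_{Br_\pm}=\sigma_{Br_+}\cap\sigma_{Br_-}$, the complement of $\sigma_{Br_\pm}$ is the \emph{union}, not the intersection, of the complements of $\sigma_{Br_+}$ and $\sigma_{Br_-}$; once corrected the case split still yields the conclusion.
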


\begin{proof}
\begin{enumerate}
  \item Suppose that $\int_0^t e^{\lambda(t-s)}\frac{s^{\alpha-1}}{\Gamma(\alpha)}ds\notin\sigma_{Br_+}(S(t))$, then there exist $n,m\in\mathds{N}$ such that
$\delta\left(\int_0^t e^{\lambda(t-s)}\frac{s^{\alpha-1}}{\Gamma(\alpha)}ds-S(t)\right)=m$,
$R\left(\int_0^t e^{\lambda(t-s)}\frac{s^{\alpha-1}}{\Gamma(\alpha)}ds-S(t)\right)$ is closed and $a\left(\int_0^t e^{\lambda(t-s)}\frac{s^{\alpha-1}}{\Gamma(\alpha)}ds-S(t)\right)=n.$
From Lemma \ref{l1} and Propositions \ref{p1} and \ref{p2}, we obtain\\
$\delta(\lambda-A)\leq m$, $R(\lambda-A)$ is closed and $a(\lambda-A)\leq n.$\\
Therefore $\lambda-A \in \Phi_+(D(A))$ and $a(\lambda-A)< \infty$ and hence $$\lambda\notin \sigma_{Br_+}(A).$$
\item Suppose that $\int_0^t e^{\lambda(t-s)}\frac{s^{\alpha-1}}{\Gamma(\alpha)}ds\notin\sigma_{Br_-}(S(t))$, then there exist $n,m\in\mathds{N}$ such that
$\beta\left(\int_0^t e^{\lambda(t-s)}\frac{s^{\alpha-1}}{\Gamma(\alpha)}ds-S(t)\right)=m$ and $d\left(\int_0^t e^{\lambda(t-s)}\frac{s^{\alpha-1}}{\Gamma(\alpha)}ds-S(t)\right)=n.$
By Lemma \ref{l1} and Proposition \ref{p2}, we obtain
$\beta(\lambda-A)\leq m$ and $d(\lambda-A)\leq n.$\\
Therefore $\lambda-A \in \Phi_-(D(A))$ and $d(\lambda-A)< \infty$ and hence $$\lambda\notin\sigma_{Br_-}(A).$$
\item It is automatic by the previous assertions of this theorem.
\end{enumerate}
\end{proof}

\begin{prop}\label{p6} Let $A$ be the generator of an $\alpha$-times integrated semigroup $(S(t))_{t\geq 0}$ with $\alpha>0$. Then for all $\lambda\in\mathds{C}$ and all $t\geq 0$, we have
\begin{enumerate}
\item $d_e\left(\int_0^te^{\lambda (t-s)}\frac{s^{\alpha-1}}{\Gamma(\alpha)}ds-S(t)\right)=n,$ then  $d_e(A-\lambda)\leq n;$
\item $a_e\left(\int_0^te^{\lambda (t-s)}\frac{s^{\alpha-1}}{\Gamma(\alpha)}ds-S(t)\right)=n,$ then  $a_e(A-\lambda)\leq n.$
\end{enumerate}
\end{prop}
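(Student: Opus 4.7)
The plan is to adapt the proof of Proposition \ref{p2} to the essential setting, exploiting the identity
$$(\lambda-A)L_{\lambda,n}(t)+[\varphi_\lambda(t)]^n[D_\lambda(t)]^n=[\phi_\lambda(t)]^n I$$
of Lemma \ref{l2}(2), together with the factorizations obtained from Lemma \ref{l1}: writing $B_\lambda(t):=\int_0^t e^{\lambda(t-s)}\frac{s^{\alpha-1}}{\Gamma(\alpha)}ds-S(t)$, one has $B_\lambda(t)=(\lambda-A)D_\lambda(t)$ on $X$ and $B_\lambda(t)x=D_\lambda(t)(\lambda-A)x$ on $D(A)$. Since all the operators involved commute pairwise (on the appropriate domains), $B_\lambda(t)^k=(\lambda-A)^k D_\lambda(t)^k$ on $X$ and $B_\lambda(t)^k x=D_\lambda(t)^k(\lambda-A)^k x$ on $D(A^k)$. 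In particular $R(B_\lambda(t)^k)\subseteq R((\lambda-A)^k)$ and $N((\lambda-A)^k)\subseteq N(B_\lambda(t)^k)$. Throughout I will restrict to $t>0$, for which $\phi_\lambda(t)\neq 0$.

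For assertion (1), I would take $y=(\lambda-A)^n x\in R((\lambda-A)^n)$ with $x\in D(A^n)$, apply Lemma \ref{l2}(2) to $x$, and hit with $(\lambda-A)^n$ to get
$$[\phi_\lambda(t)]^n y=(\lambda-A)^{n+1}L_{\lambda,n}(t)x+[\varphi_\lambda(t)]^n B_\lambda(t)^n x.$$
The hypothesis $d_e(B_\lambda(t))=n$ gives a finite-dimensional $F\subseteq X$ with $R(B_\lambda(t)^n)\subseteq R(B_\lambda(t)^{n+1})+F$, so $B_\lambda(t)^n x=B_\lambda(t)^{n+1}w+f$ for some $w\in X$, $f\in F$, with $B_\lambda(t)^{n+1}w=(\lambda-A)^{n+1}D_\lambda(t)^{n+1}w\in R((\lambda-A)^{n+1})$. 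Dividing by $[\phi_\lambda(t)]^n$ yields $y\in R((\lambda-A)^{n+1})+G$ where $G:=[\varphi_\lambda(t)/\phi_\lambda(t)]^n F$ is finite-dimensional. Hence $\dim R((\lambda-A)^n)/R((\lambda-A)^{n+1})\leq\dim F<\infty$, i.e.\ $d_e(\lambda-A)\leq n$.

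For assertion (2), I would consider the natural linear map
$$\iota:N((\lambda-A)^{n+1})\longrightarrow N(B_\lambda(t)^{n+1})/N(B_\lambda(t)^n),$$
well-defined by the inclusion $N((\lambda-A)^{n+1})\subseteq N(B_\lambda(t)^{n+1})$. The core computation is to identify $\ker\iota=N((\lambda-A)^n)$: the inclusion $\supseteq$ is trivial, while for $\subseteq$, any $x\in N((\lambda-A)^{n+1})\cap N(B_\lambda(t)^n)$ satisfies, after applying Lemma \ref{l2}(2) to $x$ and hitting with $(\lambda-A)^n$,
$$[\phi_\lambda(t)]^n(\lambda-A)^n x=L_{\lambda,n}(t)(\lambda-A)^{n+1}x+[\varphi_\lambda(t)]^n B_\lambda(t)^n x=0,$$
forcing $(\lambda-A)^n x=0$. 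The induced injection $N((\lambda-A)^{n+1})/N((\lambda-A)^n)\hookrightarrow N(B_\lambda(t)^{n+1})/N(B_\lambda(t)^n)$ combined with $a_e(B_\lambda(t))=n$ yields $a_e(\lambda-A)\leq n$.

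The argument is mostly bookkeeping once the two inclusions $R(B_\lambda(t)^k)\subseteq R((\lambda-A)^k)$ and $N((\lambda-A)^k)\subseteq N(B_\lambda(t)^k)$ are in place; the only genuinely delicate point is the kernel computation in (2), which requires both the $B_\lambda(t)^n$-term and the $(\lambda-A)^{n+1}$-term in the Lemma \ref{l2}(2) identity to vanish simultaneously, and this is exactly why the intersection with $N(B_\lambda(t)^n)$, rather than merely $N(B_\lambda(t)^{n+1})$, is the right object.
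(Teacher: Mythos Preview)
Your proof is correct and, for part (2), is essentially the paper's argument: both define the canonical map $N((\lambda-A)^{n+1})\to N(B_\lambda(t)^{n+1})/N(B_\lambda(t)^n)$ and pass to the induced injection on quotients. You are actually more careful than the paper here, since you explicitly verify via the Lemma~\ref{l2}(2) identity that the kernel of this map is exactly $N((\lambda-A)^n)$; the paper only records the inclusion $N(\psi)\subseteq N(B_\lambda(t)^n)$ and then asserts the needed comparison of quotients.

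For part (1) the two arguments differ in packaging. The paper builds a surjection $\phi:R((\lambda-A)^n)\to R(B_\lambda(t)^n)/R(B_\lambda(t)^{n+1})$ sending $(\lambda-A)^n x$ to $B_\lambda(t)^n x+R(B_\lambda(t)^{n+1})$, invokes the isomorphism theorem, and then claims $N(\phi)\subseteq R((\lambda-A)^{n+1})$ to compare quotients. You instead work directly: pick a finite-dimensional $F$ with $R(B_\lambda(t)^n)\subseteq R(B_\lambda(t)^{n+1})+F$, and use the Lemma~\ref{l2}(2) identity to push each $y\in R((\lambda-A)^n)$ into $R((\lambda-A)^{n+1})+G$ with $G$ finite-dimensional. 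Your route is more elementary and makes transparent exactly where the identity $(\lambda-A)L_{\lambda,n}(t)+[\varphi_\lambda(t)]^n D_\lambda(t)^n=[\phi_\lambda(t)]^n I$ is used, whereas the paper's verification of $N(\phi)\subseteq R((\lambda-A)^{n+1})$ would itself require essentially the computation you carry out. The underlying mechanism---the factorizations from Lemma~\ref{l1} and the intertwining identity from Lemma~\ref{l2}(2)---is the same in both proofs.
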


\begin{proof}
\begin{enumerate}
\item Suppose that $$d_e\left(\int_0^te^{\lambda (t-s)}\frac{s^{\alpha-1}}{\Gamma(\alpha)}ds-S(t)\right)=n.$$
   Since $$R\left(\int_0^te^{\lambda (t-s)}\frac{s^{\alpha-1}}{\Gamma(\alpha)}ds-S(t)\right)^n\subseteq R(\lambda-A)^n,$$
   we define the linear surjective application $\phi$ by
    \begin{eqnarray*}
    \phi: R(\lambda-A)^n &\rightarrow& R\left(\int_0^te^{\lambda (t-s)}\frac{s^{\alpha-1}}{\Gamma(\alpha)}ds-S(t)\right)^n/ R\left(\int_0^te^{\lambda (t-s)}\frac{s^{\alpha-1}}{\Gamma(\alpha)}ds-S(t)\right)^{n+1},\\
     y=(\lambda-A)^nx &\rightarrow & \left(\int_0^te^{\lambda (t-s)}\frac{s^{\alpha-1}}{\Gamma(\alpha)}ds-S(t)\right)^nx +R\left(\int_0^te^{\lambda (t-s)}\frac{s^{\alpha-1}}{\Gamma(\alpha)}ds-S(t)\right)^{n+1}.
    \end{eqnarray*}
Thus, by isomorphism Theorem, we obtain
$$R(\lambda-A)^n/ N(\phi)\simeq R\left(\int_0^te^{\lambda (t-s)}\frac{s^{\alpha-1}}{\Gamma(\alpha)}ds-S(t)\right)^n/ R\left(\int_0^te^{\lambda (t-s)}\frac{s^{\alpha-1}}{\Gamma(\alpha)}ds-S(t)\right)^{n+1}.$$
Therefore $$dim(R(\lambda-A)^n/ N(\phi))= d_e\left(\int_0^te^{\lambda (t-s)}\frac{s^{\alpha-1}}{\Gamma(\alpha)}ds-S(t)\right)=n.$$
Since
$$N(\phi) \subseteq  R\left(\int_0^te^{\lambda (t-s)}\frac{s^{\alpha-1}}{\Gamma(\alpha)}ds-S(t)\right)^{n+1}\subseteq R(\lambda-A)^{n+1},$$
hence $$R(\lambda-A)^n/ R(\lambda-A)^{n+1}\subseteq R(\lambda-A)^n/ N(\phi).$$
Finally, we obtain
$$d_e(\lambda-A)=\dim(R(\lambda-A)^n/ R(\lambda-A)^{n+1} )\leq \dim(R(\lambda-A)^n/ N(\phi))= n.$$
\item Suppose that $$a_e\left(\int_0^te^{\lambda (t-s)}\frac{s^{\alpha-1}}{\Gamma(\alpha)}ds-S(t)\right)=n.$$
   Since $$N(\lambda-A)^{n+1}\subseteq N\left(\int_0^te^{\lambda (t-s)}\frac{s^{\alpha-1}}{\Gamma(\alpha)}ds-S(t)\right)^{n+1},$$
   we define the linear application $\psi$ by
    \begin{eqnarray*}
    \psi: N(\lambda-A)^{n+1} &\rightarrow& N\left(\int_0^te^{\lambda (t-s)}\frac{s^{\alpha-1}}{\Gamma(\alpha)}ds-S(t)\right)^{n+1}/ N\left(\int_0^te^{\lambda (t-s)}\frac{s^\alpha}{\Gamma(\alpha+1)}ds-S(t)\right)^n,\\
     x &\rightarrow & x +N\left(\int_0^te^{\lambda (t-s)}\frac{s^{\alpha-1}}{\Gamma(\alpha)}ds-S(t)\right)^n.
    \end{eqnarray*}
Thus, by isomorphism Theorem, we obtain
$$N(\lambda-A)^{n+1}/ N(\psi)\simeq R(\psi)\subseteq N\left(\int_0^te^{\lambda (t-s)}\frac{s^{\alpha-1}}{\Gamma(\alpha)}ds-S(t)\right)^{n+1}/ N\left(\int_0^te^{\lambda (t-s)}\frac{s^{\alpha-1}}{\Gamma(\alpha)}ds-S(t)\right)^n.$$
Therefore $$\dim N(\lambda-A)^{n+1}/ N(\psi)\leq a_e\left(\int_0^te^{\lambda (t-s)}\frac{s^{\alpha-1}}{\Gamma(\alpha)}ds-S(t)\right)=n.$$
Since
$$N(\psi) \subseteq  N\left(\int_0^te^{\lambda (t-s)}\frac{s^{\alpha-1}}{\Gamma(\alpha)}ds-S(t)\right)^n\subseteq R(\lambda-A)^n,$$
hence $$N(\lambda-A)^{n+1}/ N(\lambda-A)^n\subseteq N(\lambda-A)^{n+1}/ N(\psi).$$
Finally, we obtain $$a_e(\lambda-A)=\dim N(\lambda-A)^{n+1}/ N(\lambda-A)^n\leq \dim N(\lambda-A)^{n+1}/ N(\psi)\leq n .$$
\end{enumerate}
\end{proof}

We will discuss in the following result the essential ascent and descent spectrum.
\begin{thm} Let $A$ be the generator of an $\alpha$-times integrated semigroup $(S(t))_{t\geq 0}$ with $\alpha>0$. Then for all $t\geq 0$
\begin{enumerate}
\item $\int_0^t e^{(t-s)\sigma_{a_e}(A)}\frac{s^{\alpha-1}}{\Gamma(\alpha)}ds\subseteq \sigma_{a_e}(S(t));$
\item $\int_0^t e^{(t-s)\sigma_{d_e}(A)}\frac{s^{\alpha-1}}{\Gamma(\alpha)}ds\subseteq \sigma_{d_e}(S(t)).$
 \end{enumerate}
\end{thm}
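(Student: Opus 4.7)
The plan is to mirror the strategy already used for the semi-Fredholm (Theorem \ref{t1}) and semi-Browder theorems in the paper: establish both spectral inclusions by contrapositive, reducing them immediately to Proposition \ref{p6}. Writing $\mu(t,\lambda) := \int_0^t e^{\lambda(t-s)}\frac{s^{\alpha-1}}{\Gamma(\alpha)}ds$, the content of each item becomes the implication that if $\mu(t,\lambda)$ lies outside $\sigma_{a_e}(S(t))$ (resp.\ $\sigma_{d_e}(S(t))$), then $\lambda$ lies outside $\sigma_{a_e}(A)$ (resp.\ $\sigma_{d_e}(A)$).

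For item (1), I would fix $t\geq 0$ and $\lambda\in\mathds{C}$ and assume $\mu(t,\lambda)\notin \sigma_{a_e}(S(t))$. By the definition of the essential ascent spectrum this means $a_e(\mu(t,\lambda)I-S(t))<\infty$, i.e.\ there exists $n\in\mathds{N}$ with
$$a_e\Bigl(\int_0^t e^{\lambda(t-s)}\tfrac{s^{\alpha-1}}{\Gamma(\alpha)}ds-S(t)\Bigr)=n.$$
Applying Proposition \ref{p6}(2) directly yields $a_e(\lambda-A)\leq n<\infty$, so $\lambda\notin \sigma_{a_e}(A)$. Taking the contrapositive gives the desired inclusion $\int_0^t e^{(t-s)\sigma_{a_e}(A)}\tfrac{s^{\alpha-1}}{\Gamma(\alpha)}ds\subseteq \sigma_{a_e}(S(t))$.

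Item (2) is handled by the same template with $a_e$ replaced by $d_e$: assuming $\mu(t,\lambda)\notin\sigma_{d_e}(S(t))$, some finite $n$ realises $d_e(\mu(t,\lambda)I-S(t))=n$, and Proposition \ref{p6}(1) then supplies $d_e(\lambda-A)\leq n<\infty$, so $\lambda\notin\sigma_{d_e}(A)$.

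Since all the analytic and algebraic work is already absorbed into Proposition \ref{p6} (which in turn relies on the identities of Lemma \ref{l2}), the present proof is essentially bookkeeping. The only point requiring care is the set-theoretic reading of the symbol $\int_0^t e^{(t-s)\sigma(A)}\tfrac{s^{\alpha-1}}{\Gamma(\alpha)}ds$ as the image of $\sigma(A)$ under $\lambda\mapsto\mu(t,\lambda)$; once this is made explicit the contrapositive pointwise statements transcribe verbatim into the claimed inclusions, and no further estimates are needed.
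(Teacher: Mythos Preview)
Your proposal is correct and follows essentially the same approach as the paper: a contrapositive argument that reduces each inclusion directly to the corresponding part of Proposition~\ref{p6}. The paper's proof is precisely this bookkeeping step, with no additional ingredients.
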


\begin{proof}
\begin{enumerate}
\item Suppose that  $$\int_0^t e^{(t-s)\lambda}\frac{s^{\alpha-1}}{\Gamma(\alpha)}ds\notin \sigma_{a_e}(S(t)).$$
    Then there exists $n\in\mathds{N}$ satisfying $$a_e\left(\int_0^t e^{(t-s)\lambda}\frac{s^{\alpha-1}}{\Gamma(\alpha)}ds-S(t)\right)=n.$$
    Therefore, by Proposition \ref{p6}, we obtain
    $a_e(\lambda-A)\leq n$ and hence $$\lambda\notin\sigma_{a_e}(A).$$
\item Suppose that $$\int_0^t e^{(t-s)\lambda}\frac{s^{\alpha-1}}{\Gamma(\alpha)}ds\notin \sigma_{d_e}(S(t)).$$
    Then there exists $n\in\mathds{N}$ satisfying $$d_e\left(\int_0^t e^{(t-s)\lambda}\frac{s^{\alpha-1}}{\Gamma(\alpha)}ds-S(t)\right)=n.$$
    Therefore, by Proposition \ref{p6}, we obtain
    $d_e(\lambda-A)\leq n$ and hence $$\lambda\notin\sigma_{d_e}(A).$$
\end{enumerate}
\end{proof}

{\small
}

\begin{thebibliography}{99}
\bibitem{r.4}\textsc{P. Aiena,} \emph{Fredholm and Local Spectral Theory with Applications to Multipliers, } Kluwer. Acad. Press, 2004.
\bibitem{r.5}\textsc{W. Arendt,} \emph{Vector-valued Laplace Transforms and Cauchy Problems,} Israel J. Math, 59 (3) (1987), 327-352.
\bibitem{r.6}\textsc{A. Elkoutri and M. A. Taoudi,} \emph{Spectral Inclusions and stability results for strongly continuous semigroups,}
Int. J. of Math. and Mathematical Sciences, 37 (2003), 2379-2387.
\bibitem{r.1}\textsc{M. Heiber,} \emph{Laplace transforms and $\alpha-$times integrated semigroups,} Forum Math. 3 (1991), 595-612.
\bibitem{r.8}\textsc{C. Kaiser,} \emph{Integrated semigroups and linear partial differential equations with delay,} J. Math Anal and Appl. 292 (2) (2004), 328-339.
\bibitem{r.9}\textsc{J.J. Koliha and T.D. Tran,} \emph{ The Drazin inverse for closed linear operators and asymptotic convergence of $C_0$-semigroups,} J.Oper.Theory. 46 (2001), 323–336.
\bibitem{r.2}\textsc{C. Miao Li and W. Quan Zheng,} \emph{$\alpha$-times integrated semigroups: local and global,} Studia Mathematica 154 (3) (2003), 243-252.
\bibitem{r.10}\textsc{V. Müller,} \emph{Spectral theory of linear operators and spectral systems in Banach algebras 2nd edition,} Oper.Theo.Adva.Appl, 139 (2007).
\bibitem{r.11}\textsc{A. Pazy,} \emph{Semigroups of Linear Operators and Applications to Partial Differential Equations,} Applied Mathematical Sciences, Springer-Verlag, New
York 1983.
\bibitem{r.12}\textsc{A. Tajmouati, A. El Bakkali and M.B. Mohamed Ahmed,} \emph{Spectral inclusions between $\alpha$-times integrated semigroups and their generators,} Submitted.
\bibitem{r.3}\textsc{A. Tajmouati and H. Boua,} \emph{Spectral theory for integrated semigroups,} Inter Journal of Pure and Appl Math, 104 (4) (2016), 847-860.
\bibitem{r.7}\textsc{A.E. Taylar and D.C. Lay,} \emph{ Introduction to Functional Analysis,} 2nd ed. New York: John Wiley and Sons, 1980.
\end{thebibliography}
\end{document}